\title{The number of fiberings of a surface bundle over a surface}
\author{Lei Chen}
\email{chenlei1991919@gmail.com}
\numberwithin{equation}{section}
\numberwithin{figure}{section}
\theoremstyle{plain}
\newtheorem{thm}{Theorem}[section]
\newtheorem{lem}[thm]{Lemma}
\newtheorem{claim}[thm]{Claim}
\newtheorem{rmk}[thm]{Remark}
\newtheorem{defn}[thm]{Definition}
\newtheorem{qu}[thm]{Question}
\theoremstyle{remark}
\begin{document}

\begin{abstract}
	For a closed manifold $M$, let SFib$(M)$ be the number of ways that $M$ can be realized as a surface bundle, up to $\pi_1$-fiberwise diffeomorphism. In this paper we consider the case of dim$(M)=4$. We give the first computation of SFib$(M)$ where $1<\text{SFib}(M)<\infty$ but $M$ is not a product. In particular, we prove SFib$(M)=2$ for the Atiyah--Kodaira manifold and any finite cover of a trivial surface bundle. We also give an example where SFib$(M)=4$.
	\end{abstract}
	\maketitle
\section{Introduction}
Let $M$ be a closed manifold and for any $g>1$ let $S_g$ denote a closed, connected, orientable surface of genus $g$. We will call the following number the \emph{surface-fibering number} of $M$:

\newcommand{\pctext}[2]{\text{\parbox{#1}{\centering #2}}}

\begin{alignat}{2}
      \label{correspondence}
 \pctext{0.7in}  {SFib$(M)$=}    \#                                                 
    \Biggl\{  \pctext{2in}{$S_g\to M \to B$  a surface bundle $:$ $g>1$ and B a closed manifold} \Biggr\}    \bigg/\sim,
      \end{alignat}
where two fiberings of $M$ are equivalent if and only if they are \emph{$\pi_1$-fiberwise diffeomorphic}; i.e. if the fundamental groups of their fibers are the same subgroups of $\pi_1(M)$. See Section 2 below for more details. The equivalence relation of $\pi_1$-fiberwise diffeomorphism is more natural than fiberwise diffeomorphism algebraically because it classifies fiberings based on how $\pi_1(M)$ can be represented as an extension by $\pi_1(S_g)$ for some $g>1$. Also $\pi_1$-fiberwise diffeomorphism is finer than fiberwise diffeomorphism; so SFib$(M)$ is an upper bound for the number of fiberings up to fiberwise diffeomorphism.

In the case dim$(M)=3$, Thurston \cite{Thurnorm} classified all possible surface bundle structures on a fixed $M$ using the Thurston norm. His theory implies that if $M$ is a surface bundle over $S^1$, then SFib$(M)=\infty$ if and only if dim$\left( H^1(M;\mathbb{Q})\right)>1$. Further, the nonzero $\mathbb{Z}$-points in the so-called `fibered cone' of $H^1(M;\mathbb{R})$ up to scalar multiplication are in one-to-one correspondence with distinct fiberings.

In this paper we study the case where dim$(M)=4$; in other words, the case where $M$ is a surface bundle over a surface. When the Euler characteristic $\chi(M)>0$, F.E.A. Johnson \cite{FEAJohnson} proved that SFib$(M)<\infty$. He also obtained an upper bound for SFib$(M)$ depending only on $\chi(M)$. For any $N>1$, Salter \cite{NickFibering} constructed an example $M_N$ such that SFib$(M_N)>N$. His work does not give the exact value of SFib$(M_N)$ for any $N$. Salter \cite{salter2015,NickFibering} proved that if the monodromy of a nontrivial bundle $S_g\to M\to B$ is in the Johnson kernel, then SFib$(M)=1$. He also proved that if $H^1(M;\mathbb{Q})\cong H^1(B;\mathbb{Q})$ then SFib$(M)=1$.

	One beautiful example of a multi-fibered 4-manifold is the \emph{Atiyah--Kodaira manifold} $M_{AK}$; see Atiyah's paper \cite{atiyah2015signature}, Kodaira's paper \cite{MR0216521} or Section 3 below for the construction. It follows from the construction that $M_{AK}$ has at least two different fiberings:
		\[
	S_6\to M_{AK} \to S_{129} \text{ }\text{ }\text{ }\text{ and }\text{ }\text{ }\text{ }
	S_{321}\to M_{AK}\to S_{3.}\]
	It is natural to ask if there are any other fiberings. Our first theorem answers this question in the negative.
\begin{thm}[\boldmath \bf Surface-fibering number of $M_{AK}$]
The Atiyah--Kodaira manifold has precisely two fiberings up to $\pi_1$-fiberwise diffeomorphism; that is
{\normalfont 
SFib$(M_{AK})=2$.} In particular, $M_{AK}$ has precisely two fiberings up to fiberwise diffeomorphism.
\label{main2}
\end{thm}
As mentioned above, fiberwise diffeomorphism is implied by $\pi_1$-fiberwise diffeomorphism. In particular, $M_{AK}$ has two fiberings up to fiberwise diffeomorphism because the two fiberings of $M_{AK}$ have different fibers and thus are clearly not fiberwise diffeomorphic. While $M_{AK}$ has been well-studied in the last 50 years by Atiyah, Hirzebruch, Kodaira and many others, we will show that there are choices involved in the construction, which are parametrized by elements in $H^1(S_{129}\times S_3;\mathbb{Z})$. See Section 3 for details. At the end of Section 3.1, we will pose the question of whether the different Atiyah--Kodaira manifolds we construct are diffeomorphic to one another as smooth manifolds.

Denote the genus of a closed oriented surface $S$ by $g(S)$. We can also compute the surface-fibering number of a finite cover over a product $B\times F$ where $B$ and $F$ are two surfaces with $g(B)>1$ and $g(F)>1$.

\begin{thm}[\bf Finite cover of a trivial bundle]
Let $E$ be a regular finite cover of a trivial bundle $B\times F$ where $B$ and $F$ are two surfaces with $g(B)>1$ and $g(F)>1$. Then \normalfont{SFib}$(E)=2$.
\label{thm3}
\end{thm}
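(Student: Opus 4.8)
The plan is to translate the enumeration of fiberings into a purely group-theoretic classification and then exploit the product structure of the cover. Since $g(B),g(F)\geq 2$, both $B$ and $F$ are aspherical, so $B\times F$ is a closed aspherical $4$-manifold and hence so is its finite cover $E$. Write $G=\pi_1(E)$, a finite-index normal subgroup of $\Gamma=P\times Q$, where $P=\pi_1(B)$ and $Q=\pi_1(F)$. Because $E$ is a $K(G,1)$, a bundle structure $S_g\to E\to B'$ is recorded faithfully by the exact sequence $1\to \pi_1(S_g)\to G\to \pi_1(B')\to 1$: the fibering is determined up to isomorphism by the normal subgroup $N=\pi_1(S_g)\triangleleft G$, and conversely every normal $N\triangleleft G$ with $N$ and $G/N$ both closed surface groups of genus $\geq 2$ is realized by an essentially unique fibering. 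Here I would invoke the same asphericity-plus-rigidity reduction underlying the analysis of $M_{AK}$. Thus it suffices to classify the normal subgroups $N\triangleleft G$ with $N$ and $G/N$ surface groups.

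Two such subgroups are immediate: $A:=G\cap(P\times 1)$ and $B:=G\cap(1\times Q)$. Each is a finite-index, hence surface, subgroup of its factor; each is normal in $G$; and the quotients $G/A\cong\pi_Q(G)\leq Q$ and $G/B\cong\pi_P(G)\leq P$ are again finite-index surface subgroups. These are precisely the fiberings obtained by composing the covering $E\to B\times F$ with the two coordinate projections.

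The core of the argument is to show there are no others. Observe that $A$ and $B$ commute, meet trivially, and $A\times B$ has finite index in $G$. Given any admissible $N$, pass to the surface group $Q_N:=G/N$ and consider the images $\bar A,\bar B$ of $A,B$. They commute elementwise and together generate a finite-index subgroup of $Q_N$. But $Q_N$ is a genus $\geq 2$ surface group, i.e.\ a one-ended, non-elementary hyperbolic group, in which the centralizer of any nontrivial element is cyclic and which contains no $\mathbb{Z}^2$; were both $\bar A,\bar B$ infinite they would generate such a $\mathbb{Z}^2$, a contradiction. Hence one of them, say $\bar B$, is finite, so $N\cap B=N\cap Q$ has finite index in $B$. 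As $N\cap B$ is then a non-free surface subgroup of the surface group $N$, and every infinite-index subgroup of a surface group is free, $N\cap B$ must also have finite index in $N$; thus $N$ is commensurable with $B$ (and symmetrically with $A$ in the other case).

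Finally I would identify the fibering inside each commensurability class. Since $N\cap B$ has finite index in $N$, the finite group $N/(N\cap B)$ injects into $G/B\cong\pi_P(G)\leq P$, which is torsion-free, forcing $N=N\cap B\subseteq B$; then $G/N$ surjects onto the surface group $G/B$ with finite kernel $B/N$, and torsion-freeness of $G/N$ gives $B/N=1$, i.e.\ $N=B$. Symmetrically the other class yields $N=A$. So exactly the two subgroups $A$ and $B$ occur, and $E$ carries precisely the two projection fiberings, which is the assertion of the theorem. The main obstacle is the rigidity step of the previous paragraph, where one must control a normal surface subgroup $N$ that has no a priori relation to the product coordinates; the commuting-subgroups-in-a-hyperbolic-group dichotomy is the decisive tool, with the freeness of infinite-index surface subgroups as the key supporting fact. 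A secondary technical point is justifying that each admissible group extension is genuinely realized by a smooth surface bundle, which is exactly where asphericity and topological rigidity enter.
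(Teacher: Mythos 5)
Your proposal is correct in substance but takes a genuinely different route from the paper. The paper never touches the subgroup structure of $\pi_1(E)$ directly: it computes $H^1(E;\mathbb{Q})\cong H^1(\mathrm{Im}(p_1);\mathbb{Q})\oplus H^1(\mathrm{Im}(p_2);\mathbb{Q})$ via the Serre spectral sequence (its Lemma 4.1), deduces injectivity of $H^2(\mathrm{Im}(p_1)\times \mathrm{Im}(p_2);\mathbb{Q})\to H^2(E;\mathbb{Q})$ from the transfer isomorphism on $H^4$ plus Poincar\'e duality, and then feeds both facts into Lemma \ref{main1}, whose proof is a cup-product/K\"unneth computation combined with Salter's disjointness lemma (Lemma \ref{nick}). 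You instead classify outright the normal subgroups $N\triangleleft G$ with $N$ and $G/N$ surface groups, using cyclic centralizers in torsion-free hyperbolic groups and the freeness of infinite-index subgroups of closed surface groups. Your route buys a sharper structural statement (the only admissible kernels are exactly $A$ and $B$, with no appeal to Salter's lemma, and it would treat a nonorientable base uniformly, a case the paper's cup-product argument quietly ignores), at the cost of a rigidity input you assert rather than prove: that fiberings of the aspherical $E$ correspond bijectively, up to bundle isomorphism, to such extensions of $G$. For fiber genus $\ge 2$ this is standard (the fiber group is centerless, so the extension determines the monodromy in $\mathrm{Out}(\pi_1 S_h)\cong \mathrm{Mod}^{\pm}(S_h)$), but it is a real ingredient that the paper's cohomological argument avoids entirely. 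Note also that your conclusion --- exactly the two projection fiberings, $\mathrm{Fib}(E)=2$ --- matches the paper's own proof and abstract; the ``$=1$'' in the theorem statement is an internal inconsistency of the paper itself.

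One step is misstated, though repairable with material you already set up: two commuting infinite subgroups $\bar A,\bar B$ of the surface group $G/N$ need \emph{not} generate a $\mathbb{Z}^2$. What is true is that every nontrivial element of $\bar A$ has cyclic centralizer containing all of $\bar B$, and vice versa, so $\langle \bar A,\bar B\rangle$ is abelian, hence cyclic inside a surface group --- possibly just $\mathbb{Z}$, so no $\mathbb{Z}^2$ ever materializes. The contradiction should instead come from the finite-index observation you already recorded: $\langle \bar A,\bar B\rangle$ is the image of the finite-index subgroup $A\times B\le G$, hence has finite index in $G/N$, and a surface group with $\chi<0$ has no finite-index abelian (indeed no virtually cyclic) subgroup. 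With this one-line repair --- and noting that torsion-freeness of $G/N$ turns ``$\bar B$ finite'' directly into $\bar B=1$, i.e.\ $B\subseteq N$, which slightly shortens your endgame --- your classification $N\in\{A,B\}$, and with it the count of fiberings, goes through.
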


Salter \cite{NickFibering} constructed a certain 4-manifold $M_S$ by performing a section sum of two copies of $S_g\times S_g$; see Section 6 for the construction. He provided 4 distinct fiberings of $M_S$; so SFib$(M_S)\ge 4$. Our next theorem classifies the fiberings of $M_S$.

\begin{thm}[\bf Salter's 4-fibering example]
Salter's example $M_S$ has precisely 4 fiberings up to $\pi_1$-fiberwise diffeomorphism; that is
\normalfont{SFib}$(M_S)=4$.
\label{thm5}
\end{thm}
Unlike the Atiyah--Kodaira example, the four fiberings of $M_S$ are actually fiberwise diffeomorphic to one another but not $\pi_1$-fiberwise diffeomorphic to one another.

All the known examples have SFib$(M)$ a power of 2. We conjecture that all the examples that Salter built in \cite{NickFibering} have SFib$(M)$ a power of 2. Therefore, we ask the following question.
\begin{qu}[\bf 3 fiberings construction]
Is there a surface bundle over a surface with total space $M$ such that \normalfont{SFib}$(M)$ is not a power of $2$?
\end{qu}

\noindent
{\large\bf Acknowledgements}

The author would like to thank Nick Salter, Ben O'Connor and Nir Gadish for their discussions related to this topic and for correcting the paper. She is grateful to anonymous referees, to Justin Lanier and to Dan Margalit for many helpful suggestions. She would also like to extend her warmest thanks to Benson Farb for asking the question, for his extensive comments and for his invaluable support from start to finish.

\section{Definition of equivalent fiberings and a criterion for two fiberings}
In this section we will introduce the definition of $\pi_1$-fiberwise diffeomorphism, which is the equivalence relation we use in defining fibering numbers. We will also give a cohomological criterion for a 4-manifold $M$ such that SFib$(M)=2$. In this article, we only discuss surface bundles rather than general fiber bundles. Thus we will use ``fiberings" to replace ``surface-fiberings." When we talk about fundamental groups in this paper, we omit the base point.

\begin{defn}[\boldmath \bf $\pi_1$-fiberwise diffeomorphism]
Given any closed manifold $M$, two fiberings $F_1\to M\xrightarrow{p_1} B_1$ and $F_2\to M\xrightarrow{p_2} B_2$ are \textbf{\boldmath $\pi_1$-fiberwise diffeomorphic} if they satisfy the following conditions:

1) There exists the following diagram:
\[
\xymatrix{
M\ar[r]^{a}\ar[d]^{p_1} & M\ar[d]^{p_2}\\
B_1\ar[r]^{b} & B_2}
\] 
where $a,b$ are both diffeomorphisms.

2) Let $a_*:\pi_1(M)\to \pi_1(M)$ be the induced map on the fundamental groups. We have that $a_*(\pi_1(F_1))=\pi_1(F_1)$.
\label{pid}
\end{defn}
As equivalence relations, $\pi_1$-fiberwise diffeomorphism is finer than fiberwise diffeomorphism or bundle diffeomorphism, where we do not assume the second condition in Definition \ref{pid}. In other words, the numbering of fiberings of $M$ up to fiberwise diffeomorphism is at most SFib$(M)$. To further classify the fiberings up to fiberwise diffeomorphism, we only need to check all the equivalence classes under $\pi_1$-fiberwise diffeomorphism. We use $\pi_1$-fiberwise diffeomorphism because it is more natural on the group-theoretic level. Two fiberings $F_1\to M\xrightarrow{p_1} B_1$ and $F_2\to M\xrightarrow{p_2} B_2$ are $\pi_1$-fiberwise diffeomorphic if and only if $\pi_1(F_1)$ and $\pi_1(F_2)$ are the same subgroups in $\pi_1(M)$. From now on, we call two fiberings equivalent if they are $\pi_1$-fiberwise diffeomorphic. We have the following lemma of Salter \cite[Lemma 3.3]{NickFibering}.

\begin{lem}
Given any closed 4-manifold $M$, if there are two fiberings $M\xrightarrow{p_1} B_1$ and $M\xrightarrow{p_2} B_2$ that are not equivalent, then $p_1^*(H^1(B_1;\mathbb{Q})) \cap p_2^*(H^1(B_2;\mathbb{Q}))=\{0\}$.
\label{nick}
\end{lem}
The following lemma is a cohomological criterion for a 4-manifold $M$ such that SFib$(M)=2$.

\begin{lem}[\boldmath  \bf A Criterion for SFib$(M)=2$]
Let $S_{h_1}\to M\xrightarrow{p_1} S_{g_1}$ and $S_{h_2}\to M\xrightarrow{p_2} S_{g_2}$ be two surface bundles over a surface where $h_1,g_1,h_2,g_2>1$ and $p_1$ is not equivalent to $p_2$. Let $(p_1,p_2):M\to S_{g_1}\times S_{g_2}$. If 
\[
(p_1,p_2)^*:H^1(S_{g_1}\times S_{g_2};\mathbb{Q})\cong p_1^*H^1(S_{g_1};\mathbb{Q})\oplus p_2^*H^1(S_{g_2};\mathbb{Q})\cong H^1(M;\mathbb{Q})
\]
 and if 
 \[(p_1,p_2)^*:H^2(S_{g_1}\times S_{g_2};\mathbb{Q})\to H^2(M;\mathbb{Q}) \text{   is injective, }
 \]
then \normalfont{SFib}$(M)=2$.
\label{main1}
\end{lem}

\begin{proof}
Suppose there exists a third fibering $F\to M\xrightarrow{p} B$ such that $p$ is not equivalent to $p_1$ or $p_2$. By Lemma \ref{nick}, for any nonzero element $x\in H^1(B;\mathbb{Q})$, we have that $p^*(x)\notin p_1^*H^1(S_{g_1};\mathbb{Q})$ and $p^*(x)\notin p_2^*H^1(S_{g_2};\mathbb{Q})$. Therefore, there exists $a\neq 0\in p_1^*H^1(S_{g_1};\mathbb{Q})$ and $b \neq 0 \in p_2^*H^1(S_{g_2};\mathbb{Q})$ such that
\[
p^*(x)=a+b \in p_1^*H^1(S_{g_1};\mathbb{Q})\oplus p_2^*H^1(S_{g_2};\mathbb{Q})\cong H^1(M;\mathbb{Q}).
\]
Since $\chi(M)>0$ and $\chi(F)<0$, we have $\chi(B)<0$ implying $g(B)>1$. Therefore we have another element $y\neq 0\in H^1(B;\mathbb{Q})$ not a multiple of $x$ but satisfying that 
 \[
 x\smile y=0\in H^2(B;\mathbb{Q}).
 \] 
 Suppose that 
 \[
p^*(y)=c+d.
\]
Since $x\smile y=0$,
\[
(a+b)\smile (c+d)=0\in  (p_1,p_2)^*H^2(S_{g_1}\times S_{g_2};\mathbb{Q})\subset H^2(M;\mathbb{Q}).
\]
By the K\"unneth formula
\[
H^2(S_{g_1}\times S_{g_2};\mathbb{Q})\cong H^2(S_{g_1};\mathbb{Q})\oplus \left[H^1(S_{g_1};\mathbb{Q})\otimes H^1(S_{g_2};\mathbb{Q})\right]\oplus H^2(S_{g_2};\mathbb{Q}),
\]
$a\smile d+b\smile c=0$. By skew-commutativity of cup product, $a\smile d=c\smile b$. By the property of the tensor product of vector spaces, the only possibility is that $c=ka$ and $d=kb$ for some $k\in \mathbb{Q}$. In this case, $y$ is a multiple of $x$, which contradicts the fact that $y$ is not a multiple of $x$. The result follows.
\end{proof}

\section{\boldmath Description of $M_{AK}$ and the uniqueness problem}
In this section, we will describe the Atiyah--Kodaira manifold $M_{AK}$ and also its monodromy representation. 
While $M_{AK}$ has been vastly studied in the last 50 years, we will show below that there are choices involved in the construction, which are parametrized by elements in a cohomology group. At the end, we will pose the question of whether the different Atiyah--Kodaira manifold determine diffeomorphic manifolds.
\subsection{\boldmath The geometric construction of $M_{AK}$}
\indent
We now construct the Atiyah--Kodaira manifold $M_{AK}$, following Morita \cite[Chapter 4.3]{Mor}. Let $S_3$ be a surface of genus $3$ and let $\tau$ be a free $\mathbb{Z}/2\mathbb{Z}$-action on $S_3$ as Figure \ref{figure10}. The trivial bundle $S_3\times S_3$ has 2 sections: $\Gamma_{id}$ the graph of the identity and $\Gamma_\tau$ the graph of $\tau$. 

\begin{figure}[H]
\centering
 \includegraphics[scale=0.55]{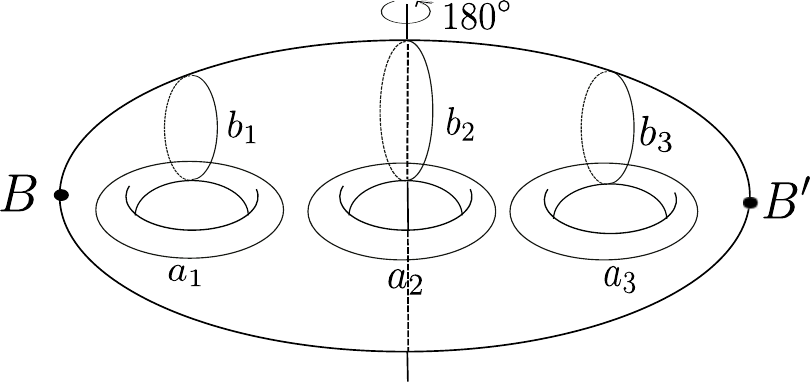}
 \caption{Involution $\tau$}
  \label{figure10}
\end{figure}

Since the action is free, the two sections are disjoint. The kernel of the surjective homomorphism $\pi_1(S_3)\to H_1(S_3;\mathbb{Z}/2)$ gives a finite cover $i:S_{129}\to S_3$. We have the following exact sequence:
\[
1\to \pi_1(S_{129})\xrightarrow{i_*}\pi_1(S_3)\to H_1(S_3;\mathbb{Z}/2)\to 1.
\]
The pull-back surface bundle $i^*(S_3\times S_3)\cong S_{129}\times S_3$ also has 2 sections $S_i=i^*(\Gamma_{id})$ and $S_\tau=i^*(\Gamma_\tau)$. We have $S_i=\text{graph}(i)$ and $S_{\tau}=\text{graph}(\tau\circ i)$. The plan now is to characterize $\mathbb{Z}/2$-branched covers over $S_{129}\times S_3$ with branch locus $S_i\cup S_{\tau}$. We begin by computing the Poincar\'e dual of the homology class $[S_i]+[S_{\tau}]$. The K\"unneth formula gives us the following:
\[
H_2(S_{129}\times S_3;\mathbb{Z}/2)\cong H_2(S_{129};\mathbb{Z}/2)\oplus\left[H_1(S_{129};\mathbb{Z}/2)\otimes H_1(S_3;\mathbb{Z}/2)\right]\oplus H_2(S_3;\mathbb{Z}/2).
\]
Let $[S_{129}]$ and $[S_3]$ be the fundamental classes of $H_2(S_{129};\mathbb{Z}/2)$ and $H_2(S_3;\mathbb{Z}/2)$. Pick points $p_0\in S_{129}$ and $q_0\in S_3$. Let $e_1:S_{129}\to S_{129}\times S_3$ and $e_2:S_{3}\to S_{129}\times S_3$ be maps $e_1(x)=(x,q_0)$ and $e_2(y)=(p_0,y)$. By the computation in \cite[Chapter 11]{MR0440554} and the fact that $i_*$ induces zero map on $H_1(\text{---};\mathbb{Z}/2)$, we have $[S_i]=e_{1*}[S_{129}] \text{ and } [S_{\tau}]=e_{1*}[S_{129}]\in H_2(S_{129}\times S_3;\mathbb{Z}/2)$. Therefore 
\[
[S_i]+[S_{\tau}]=e_{1*}[S_{129}]+e_{1*}[S_{129}]=0\in H_2(S_{129}\times S_3;\mathbb{Z}/2).
\]
Denote the Poincar\'e dual of $[S_i]+[S_{\tau}]$ by $PD([S_i]+[S_{\tau}])$. By Poincar\'e duality
 \[
 PD([S_i]+[S_{\tau}])=0\in H^2(S_{129}\times S_3;\mathbb{Z}/2).
 \]
Let $M:=S_{129}\times S_3-S_i-S_{\tau}$. We have the long exact sequence of cohomology of the relative pair $(S_{129}\times S_3,M)$:
\begin{equation}
H^1(S_{129}\times S_3,M;\mathbb{Z}/2)\to H^1(S_{129}\times S_3;\mathbb{Z}/2)\to H^1(M;\mathbb{Z}/2)\xrightarrow{\phi} H^2(S_{129}\times S_3,M;\mathbb{Z}/2)\xrightarrow{T} H^2(S_{129}\times S_3;\mathbb{Z}/2).
\label{phi}
\end{equation}
\indent
By the Thom isomorphism theorem, we have 
\[H^1(S_{129}\times S_3,M;\mathbb{Z}/2)=0\]
 and 
\[
H^2(S_{129}\times S_3,M;\mathbb{Z}/2)\cong \mathbb{Z}/2\oplus \mathbb{Z}/2.
\] 
Let $T$ and $\phi$ be the homomorphisms in the exact sequence (\ref{phi}). Now $T(1,0)=PD[S_i]$ and $T(0,1)=PD[S_{\tau}]$. Therefore 
\[
T(1,1)=0\in H^2(S_{129}\times S_3;\mathbb{Z}/2).
\]
So $\phi^{-1}(1,1)$ is not empty in $H^1(M;\mathbb{Z}/2)$. By the isomorphism
\[
\text{Hom}(\pi(M),\mathbb{Z}/2)\cong H^1(M;\mathbb{Z}/2),
\]
we have that $H^1(M;\mathbb{Z}/2)$ classifies $\mathbb{Z}/2$-covers of $M$. Therefore $\phi^{-1}(1,1)$ classifies the $\mathbb{Z}/2$-branched covers of $S_{129}\times S_3$ with branch locus $S_i\cup S_{\tau}$. Let $M_{AK}$ be one of them. These branched covers are characterized by a subset of $H^1(M;\mathbb{Z}/2)$, which is an affine space over $H^1(S_{129}\times S_3;\mathbb{Z}/2)$ by the exact sequence \eqref{phi}. Later we will analyze how an element of $H^1(S_{129}\times S_3;\mathbb{Z}/2)$ affects the monodromy. We also pose a question about the Atiyah--Kodaira construction.
\begin{qu}[\bf Uniqueness of Atiyah--Kodaira example]
After fixing the trivial bundle $S_{129}\times S_3$ and the two sections $S_i$ and $S_{\tau}$, there are many choices of branched covers of $S_{129}\times S_3$ with branch locus $S_i\cup S_{\tau}$. Are the different branched covers diffeomorphic as smooth manifolds?
\end{qu}
\subsection{\boldmath The monodromy description of $M_{AK}$}
In this subsection, we provide a second construction of $M_{AK}$ from the point of view of the monodromy representation. As is known, for $g>1$ the monodromy representation determines an $S_g$-bundle uniquely; see e.g. the book by Farb and Margalit \cite[Chapter 5.6]{BensonMargalit}.

Let $S_{g,n}$ be a genus $g$ surface with $n$ punctures. Let PMod$_{g,n}$ (resp. Mod$_{g,n}$) be the pure mapping class group of $S_{g,n}$, i.e. the group of isotopy classes of orientation-preserving diffeomorphisms of $S_g$ that fix $n$ points individually (resp. as a set). We omit $n$ when $n=0$. Let PConf$_n(S)$ \emph{the pure configuration space} of a surface $S$ be the space of ordered $n$-tuples of distinct points on $S$. We have a \emph{generalized Birman exact sequence} as the following \cite[Theorem 9.1]{BensonMargalit}:
\[
1\to \pi_1(\text{PConf}_n(S_g))\xrightarrow{\text{Push}} \text{PMod}_{g,n}\to \text{Mod}_g\to 1.
\]
The two disjoint sections of the bundle $S_3\times S_3$ give us a map $(id,\tau):S_3\to \text{PConf}_2(S_3)$, and hence a monodromy representation:
\[
\rho_\tau:\pi_1(S_3)\to \pi_1(\text{PConf}_2(S_g))\xrightarrow{\text{Push}} \text{PMod}_{3,2}.
\] 
Let $B\in S_3$ and $B'=\tau(B)$. The $\mathbb{Z}/2$-branched covers of $S_3$ with branch points $B$ and $B'$ are parametrized by a subset of  $H^1(S_{3,2};\mathbb{Z}/2)$. Pick any $\mathbb{Z}/2$-branched cover $\pi: S_{6}\to S_{3}$ with a deck transformation $\sigma$ and branch points $\{B,B'\}$.

\begin{figure}[H]
\centering
 \includegraphics[scale=0.55]{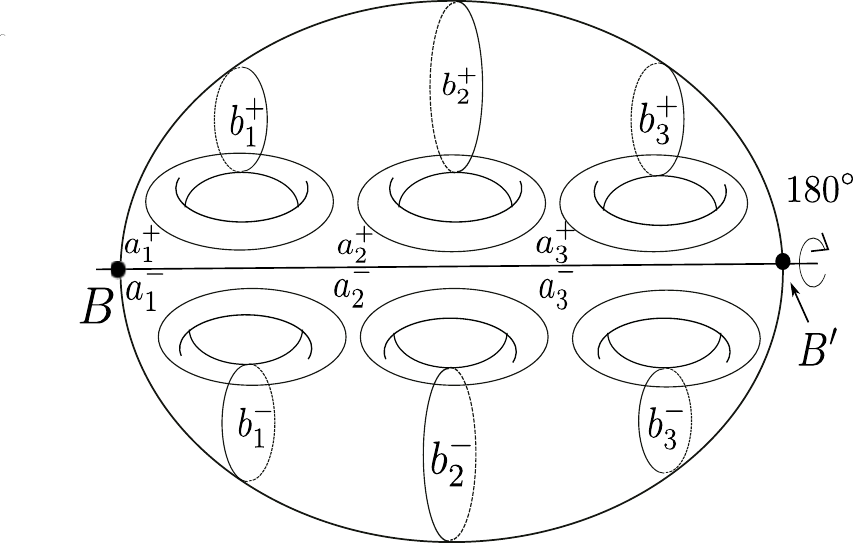}
 \caption{Deck transformation $\sigma$}
  \label{figure4}
\end{figure}

 Let PMod$_{6,2}^{\sigma}$ be the centralizer of $\sigma$ in PMod$_{6,2}$. We have a map 
$p_\sigma:  \text{PMod}_{6,2}^{\sigma}\to \text{Mod}_{3,2}$. By the construction, $\pi_1(S_{129})$ acts trivially on $H^1(S_{3,2};\mathbb{Z}/2)$; it can also be seen by computing the $\rho_{\tau}(\pi_1(S_3))$-action on $H^1(S_{3,2};\mathbb{Z}/2)$. The monodromy $\rho_\tau':=\rho_\tau |_{\pi_1(S_{129})}:\pi_1(S_{129})\to \pi_1(S_3)\to \text{Mod}_{3,2}$ admits a lift to $\text{PMod}_{6,2}^{\sigma}$ as the following diagram \eqref{LD}; i.e. there exists $\rho$ such that $p_\sigma \circ \rho=\rho_\tau'$. Let $f: \text{PMod}_{6,2}\to \text{Mod}_6$ be the forgetful map and let $\rho_{AK}=\rho\circ f$ be the monodromy representation of a lift.
\begin{equation}
\xymatrix{
&\text{PMod}_{6,2}^{\sigma}\ar[d]^{p_\sigma} \ar[r]^f&\text{Mod}_6\\
\pi_1(S_{129})\ar[r]_{\rho_\tau'}\ar@{-->}[ur]^-\rho \ar@{-->}[urr]^{\rho_{AK}}& \text{Mod}_{3,2}.&\\
}\label{LD}
\end{equation}
The geometric construction depends on some noncanonical parameters; similarly this phenomenon reappears when we consider the monodromy representation. This will be discussed in the following remark.

\begin{rmk}
The lift $\rho$ in diagram \eqref{LD} is not unique! Let $\{g_i, h_i\}$ be the generators of $\pi_1(S_{129})$ such that $\pi_1(S_{129})=\langle g_i,h_i| \prod_i[g_i,h_i]=1\rangle$. Because $\sigma$ commutes with any element in the set $\{G_i=\rho_{AK}(g_i), H_i=\rho_{AK}(h_i)\}$, we could multiply $\sigma$ with a subset of $\{G_i, H_i\}$ to obtain a new monodromy representation. For example, $\{G_i\sigma, H_i\}$ is a new monodromy representation.
Among all the different monodromy representations, are the total spaces of the surface bundles diffeomorphic to each other? 
\end{rmk}

\section{The proof of Theorem \ref{main2}}
In this section, we will give a proof of Theorem \ref{main2} by computing $H^1(M_{AK};\mathbb{Q})$.

\subsection{The lift of a square of a point push}
In this subsection, we will determine the lifts of some elements of $\pi_1(S_{129})$ to Mod$_6$ under the branched cover. For any simple closed curve $c$, we denote the Dehn twist about $c$ by $T_c$. For any loop $L$ at the base point $B$, denote  the point-pushing map on $L$ by Push$(L)$. Let $a$ be the loop in Figure \ref{figure2}. We have Push$(a)=T_xT_y^{-1}$; e.g. see \cite[Fact 4.7]{BensonMargalit}.

\begin{figure}[H]
\minipage{0.48\textwidth}
  \includegraphics[width=\linewidth]{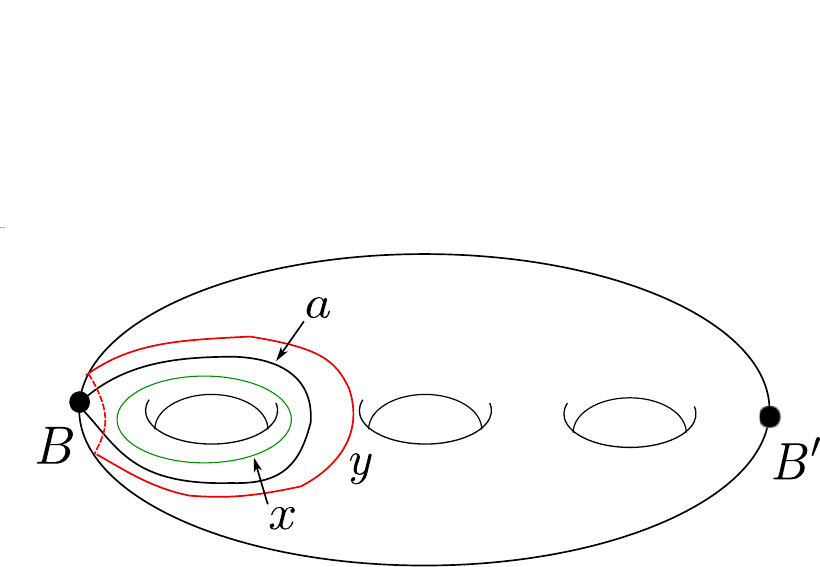}
  \caption{Point-pushing. The lighter colors (green and red) represent $x$ and $y$
  and the darker color (black) represents $a$}
  \label{figure2}
\endminipage\hfill
\minipage{0.48\textwidth}
  \includegraphics[width=\linewidth]{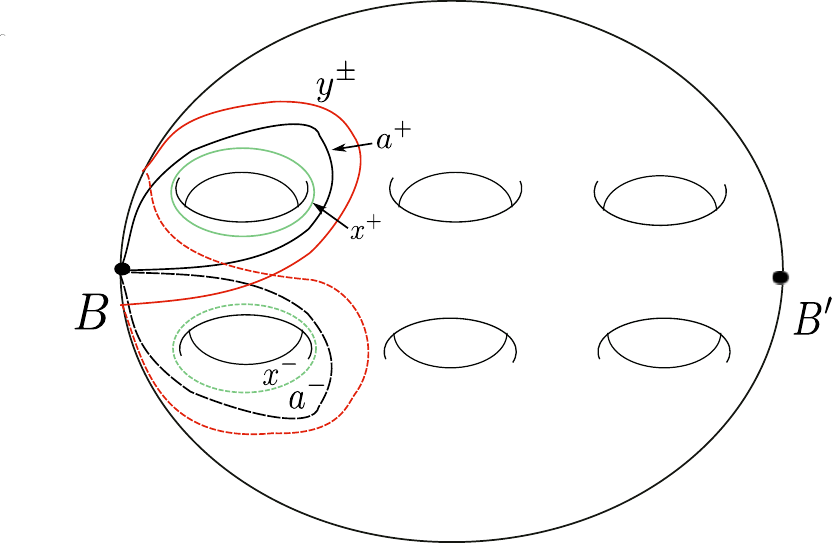}
  \caption{Lifts of $a,x,y$ with the same colorings as Figure \ref{figure2}}
\label{figure3}
\endminipage\hfill
\end{figure}

Since $B$ is one of the branched points of the $\mathbb{Z}/2$-cover $\pi: S_{6}\to S_{3}$, one of the curves $x$ or $y$ will lift to two copies and the other will lift to a single copy. The curve $a$ will have two lifts, which we call $a^-$ and $a^+$. Since $\text{Push}(a)^2$ acts trivially on $H_1(S_{3,2};\mathbb{Z}/2)$, the action $\text{Push}(a)^2$ lifts to an action on $S_6$. Let Lift$(\text{Push}(a)^2)$ be the lift of the point-pushing action on $S_6$. For any two curves $c_1,c_2$, let $i(c_1,c_2)$ be the algebraic intersection number of $c_1$ and $c_2$. In the following computation, we will use the letter $a,x,y$ to represent either the curves or the homology classes that the curves represent.
\begin{lem}
Pick a direction on $a$ and assign directions on $a^+,a^-$ accordingly. For any $c\in H_1(S_6;\mathbb{Q})$, the action of Lift$(\text{Push}(a)^2)$ on $c$ has the following 2 possibilities:
\[\text{Lift}(\text{Push}(a)^2)(c)=c\pm i(c,a^+-a^-)(a^+-a^-)\]
or
\[\text{Lift}(\text{Push}(a)^2)(c)=\sigma_*(c\pm i(c,a^+-a^-)(a^+-a^-)).\]
\label{3.1.2}
\end{lem}
\begin{proof}
Suppose without loss of generality that Lift$(x)=x^+\cup x^-$ and Lift$(y)=y^\pm$. By looking at the action locally, we have that Lift$(T_x^2)=T_{x^-}^2T_{x^+}^2$ and Lift$(T_y^2)=T_{y^\pm}$. Therefore
\[
\text{Lift}(\text{Push}(a)^2)=\text{Lift}(T_x^2T_y^{-2})=T_{x^-}^2T_{x^+}^2T_{y^\pm}^{-1}.
\]
$x^+$ and $x^-$ are homotopic to $a^+$ and $a^-$ on $S_6$ respectively, so as homology classes $x^+=a^+$ and $x^-=a^-$. Since $x^+,x^-,y^\pm$ bound a pair of pants, there exists an orientation of $y^\pm$ such that as a homology class, $y^\pm=a^++a^-$. Thus we have the following computation on the action of the homology.
\begin{equation}
\begin{split}
T_{x^+}^2T_{x^-}^2T_{y^\pm}^{-1}(c) & =c-i(c,y^\pm)y^\pm+i(c,x^+)2x^++i(c,x^-)2x^-\\
&=c-i(c,a^++a^-)(a^++a^-)+i(c,a^+)2a^++i(c,a^-)2a^-\\
&=c+i(c,a^--a^+)(a^--a^+).
\end{split}
\end{equation}

In the case where Lift$(y)=y^+\cup y^-$ and Lift$(x)=x^\pm$, we have 

\[\text{Lift}(\text{Push}(a)^2)(c)=c-i(c,a^+-a^-)(a^+-a^-).\]

Since every element has two lifts that differ by the deck transformation $\sigma$, we have the second possibility.
 \end{proof}

\subsection{\boldmath  The eigendecomposition of the action of $\sigma_*$ on $H^1(S_6;\mathbb{Q})$}
In this subsection, we will discuss the eigendecomposition of the action of $\sigma_*$ on $H^1(S_6;\mathbb{Q})$ and determine the image of $\pi^*:H^1(S_3;\mathbb{Q})\to H^1(S_6;\mathbb{Q})$ induced from the $\mathbb{Z}/2$-branched cover $\pi:S_6\to S_3$. The action of the deck transformation $\sigma$ on $S_6$ induces a decomposition of $H_1(S_6;\mathbb{Q})$ by the eigenvalues of the $\sigma_*$-action on $H_1(S_6;\mathbb{Q})$. Since $\sigma$ is an involution, the eigenvalues of $\sigma_*$ are $\{\pm1\}$. Let $H^+$ be the eigenspace of $\sigma$ associated with eigenvalue $+1$ and $H^-$ the eigenspace of $\sigma$ associated with eigenvalue $-1$. Then there is a direct sum
\[ 
H_1(S_6;\mathbb{Q})=H^-\oplus H^+.
\]
Via the universal coefficient theorem, $f\in H^1(S_6;\mathbb{Q})$ corresponds to a functional $f:H_1(S_6;\mathbb{Q})\to \mathbb{Q}$.
\begin{claim}
A functional $f:H_1(S_6;\mathbb{Q})\to \mathbb{Q}$ belongs to $\pi^*H^1(S_3;\mathbb{Q})$ if and only if $H^{-} \subset ker(f)$.
\end{claim}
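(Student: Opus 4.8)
The plan is to reformulate the statement as an equality of subspaces of $H^1(S_6;\mathbb{Q})$ and then prove the two inclusions separately. Since $H_1(S_6;\mathbb{Q})=H^+\oplus H^-$ as $\sigma_*$-modules, the dual space $H^1(S_6;\mathbb{Q})=\mathrm{Hom}(H_1(S_6;\mathbb{Q}),\mathbb{Q})$ splits correspondingly, and the subspace of functionals vanishing on $H^-$ is exactly the $+1$-eigenspace of the induced action $\sigma^*$ on $H^1(S_6;\mathbb{Q})$: if $f$ kills $H^-$ then $(\sigma^*f)(c)=f(\sigma_* c)=f(c)$ on both summands, and conversely a $\sigma^*$-fixed $f$ satisfies $f(c)=-f(c)$, hence $f(c)=0$, on $H^-$. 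Thus the claim is equivalent to the identity $p^*H^1(S_3;\mathbb{Q})=H^1(S_6;\mathbb{Q})^{\sigma}$, where $H^1(S_6;\mathbb{Q})^{\sigma}$ denotes the $\sigma^*$-invariants.

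For the inclusion $p^*H^1(S_3;\mathbb{Q})\subseteq H^1(S_6;\mathbb{Q})^{\sigma}$ (the forward direction of the claim), I would use that $\sigma$ is the deck transformation of $p$, so $p\circ\sigma=p$ and hence $p_*\circ\sigma_*=p_*$ on homology. Applied to $c\in H^-$ this gives $p_*(c)=p_*(\sigma_* c)=p_*(-c)=-p_*(c)$, so $p_*(H^-)=0$; therefore any pulled-back functional $f=p^*g$ satisfies $f(c)=g(p_*c)=0$ for $c\in H^-$, i.e. $H^-\subset\ker(f)$. Dually this is just $\sigma^*p^*=p^*$.

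The reverse inclusion $H^1(S_6;\mathbb{Q})^{\sigma}\subseteq p^*H^1(S_3;\mathbb{Q})$ is the substance, and I expect it to be the main obstacle: I must show that \emph{every} $\sigma$-invariant class descends to the base. The clean way is to observe that $S_3$ is the quotient $S_6/\langle\sigma\rangle$ and to invoke the fact that, for a finite group $G$ acting on a nice space $X$, the pullback $\pi^*\colon H^*(X/G;\mathbb{Q})\to H^*(X;\mathbb{Q})^{G}$ along the quotient map is an \emph{isomorphism onto the invariants} (over $\mathbb{Q}$ the group ring is semisimple, and the averaging operator $\tfrac12(1+\sigma^*)$ is an idempotent projecting $H^1(S_6;\mathbb{Q})$ onto $H^1(S_6;\mathbb{Q})^{\sigma}$). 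Here $p$ is this quotient map and $p^*$ realizes the isomorphism, so $H^1(S_6;\mathbb{Q})^{\sigma}=p^*H^1(S_3;\mathbb{Q})$ exactly. The point to be careful about is that $p$ is a \emph{branched} cover: $\sigma$ has fixed points over $b$ and $b'$, so $p$ is not an honest covering map and the ordinary covering-space transfer does not literally apply; what saves the argument is that the isomorphism onto invariants holds for arbitrary, not necessarily free, finite-group actions over $\mathbb{Q}$.

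As a consistency check, and as an alternative to quoting this isomorphism, one can finish by a dimension count: the genera give $\dim H^1(S_3;\mathbb{Q})=6$ and $\dim H^1(S_6;\mathbb{Q})=12$, while (via the quotient, or Riemann--Hurwitz together with the eigenspace splitting) $\dim H^+=\dim H^-=6$; hence the $\sigma$-invariant subspace and $p^*H^1(S_3;\mathbb{Q})$ both have dimension $6$, and since one contains the other they coincide. Equivalently, one shows directly that $p_*|_{H^+}\colon H^+\to H_1(S_3;\mathbb{Q})$ is an isomorphism and, given $f$ vanishing on $H^-$, sets $g=f\circ(p_*|_{H^+})^{-1}$ to exhibit $f=p^*g$.
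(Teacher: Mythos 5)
Your proposal is correct and takes essentially the same route as the paper: the paper's two-line proof invokes the transfer map to identify $H^+\cong H_1(S_3;\mathbb{Q})$ and observes that $p_*$ is the projection onto the $H^+$ coordinate, which is precisely your closing alternative ($p_*|_{H^+}$ is an isomorphism and $g=f\circ (p_*|_{H^+})^{-1}$ exhibits $f=p^*g$), while your main argument via the isomorphism $p^*\colon H^1(S_3;\mathbb{Q})\to H^1(S_6;\mathbb{Q})^{\sigma}$ onto the $\sigma$-invariants is the same transfer/averaging fact in cohomological form. Your explicit caution that $p$ is a \emph{branched} cover, so the ordinary covering-space transfer must be replaced by the rational isomorphism valid for arbitrary (not necessarily free) finite group actions, is a genuine subtlety that the paper passes over silently.
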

\begin{proof}
It is classical that $\pi^*:H^1(S_3;\mathbb{Q})\to H^1(S_6;\mathbb{Q})^{\mathbb{Z}/2}$ is an isomorphism; e.g. see \cite[Theorem III.2.4]{MR0413144}. Then $f\in H^1(S_6;\mathbb{Q})^{\mathbb{Z}/2}$ if and only if $\sigma^*(f)=f$ if and only if $\sigma^*(f)(x)=f(x)$ for any $x\in H_1(S_6;\mathbb{Q})$ if and only if $f(\sigma_*(x)-x)=0$ for any $x\in H_1(S_6;\mathbb{Q})$. Since $\sigma_*$ is an involution, we know that the subspace of $H_1(S_6;\mathbb{Q})$ that are spanned by $\{\sigma_*(x)-x|x\in H_1(S_6;\mathbb{Q})\}$ is $H^-$. Therefore $f\in \pi^*H^1(S_3;\mathbb{Q})$ if and only if $H^{-} \subset ker(f)$.
\end{proof}

In Figure \ref{figure4}, we have a geometric description of a basis $\{a_1^+,a_1^-,...\}$ of $H_1(S_6;\mathbb{Q})$ where $a_i^+$ and $a_i^-$ or $b_i^+$ and $b_i^-$ are each other's images under the $\sigma_*$ action. 

\subsection{\boldmath The $\pi_1(S_{129})$-invariant cohomology}
Let $a_1$ be a simple loop based at $B$ as in Figure \ref{figure10}. Since $a_1$ does not intersect $\tau({a_1})$, the monodromy action of $a_1$ on $S_{3,2}$ is the product of point-pushings at $B$ and $B'$ of $a_1$ and $\tau(a_1)$. By the monodromy description of $M_{AK}$, we have that $\rho_{AK}(a_1^2)=\text{Lift}(\text{Push}(a_1)^2\text{Push}(\tau(a_1))^2)$. 

\begin{lem}
Let $f\in H^1(S_6;\mathbb{Q})^{\pi_1(S_{129})}$ be an invariant cohomology class. Then $f$ satisfies $f(a_1^+-a_1^-)=0$ and $f(a_3^+-a_3^-)=0$.
\label{comp}
\end{lem}
\begin{proof}
By Lemma \ref{3.1.2}, $\text{Push}(a_1)^2\text{Push}(\tau(a_1))^2$ has two possible lifts that differ by the deck transformation $\sigma$. 
\\
{\bf Case 1:} For any $c\in H_1(S_6;\mathbb{Q})$,
\[
\rho_{AK}(a_1^2)(c)=
c\pm i(c,a_1^+-a_1^-)(a_1^+-a_1^-)\pm i(c,a_3^+-a_3^-)(a_3^+-a_3^-).
\]
Since $f$ is invariant under the action of $\rho_{AK}(a_1^2)$, we have $f(\rho_{AK}(a_1^2)(c))=f(c)$ for any $c\in H_1(S_6;\mathbb{Q})$. After evaluating $f$ on both sides, we obtain:

\[
f(c)=f(c)\pm i(c,a_1^+-a_1^-)f(a_1^+-a_1^-)\pm i(c,a_3^+-a_3^-)f(\tilde{a_3}-a_3^+).
\]  
Equivalently, 
\[
i(c,a_1^+-a_1^-)f(a_1^+-a_1^-)\pm i(c,a_3^+-a_3^-)f(a_3^+-a_3^-)=0.
\]
However, $a_1^+-a_1^-$ and $a_3^+-a_3^-$ are independent elements in $H_1(S_6;\mathbb{Q})$, so we can find $c$ such that $i(c,a_1^+-a_1^-)=0$ and $i(c,a_3^+-a_3^-)=1$. Therefore we must have $f(a_3^+-a_3^-)=0$. By the same argument, $f(a_1^--a_1^+)=0$.
\\
\\
{\bf Case 2:} For any $c\in H_1(S_6;\mathbb{Q})$, \[
\rho_{AK}(a_1^2)(c)=
\sigma_*(c\pm i(c,a_1^+-a_1^-)(a_1^+-a_1^-)\pm i(c,a_3^+-a_3^-)(a_3^+-a_3^-)).
\]
Since $f$ is invariant under the action of $a_1^2$, we have $f(\rho_{AK}(a_1^2)(c))=f(c)$. After evaluating $f$ on both sides, we obtain:
\[f(c)=f(\sigma_*(c))\pm i(c,a_1^--a_1^+)f(\sigma_*(a_1^--a_1^+))\pm i(c,a_3^+-a_3^-)f(\sigma_*(a_3^+-a_3^-)).
\] 
If we set $c=a_1^-$ and $c=a_3^-$, we obtain 
\[
f(a_1^-)=f(a_1^+)
\]
and 
\[
f(a_3^-)=f(a_3^+).
\]
\end{proof}
This allows us to determine the full invariant cohomology $H^1(S_6;\mathbb{Q})^{\pi_1(S_{129})}$.
\begin{lem}
We have the following isomorphism \label{keypoint}
\[
H^1(S_6;\mathbb{Q})^{\pi_1(S_{129})}\cong H^1(S_3;\mathbb{Q}).
\]
\end{lem}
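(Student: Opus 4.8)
The plan is to show that the invariant subspace is exactly the annihilator of $H^-$, i.e.
\[
H^1(S_6;\mathbb{Q})^{\pi_1(S_{129})}=\{f:H^-\subseteq\ker(f)\}.
\]
Granting this, the claim of Section 3.2 identifies the right-hand side with $p^*H^1(S_3;\mathbb{Q})$, and since $p^*$ is injective this group is abstractly isomorphic to $H^1(S_3;\mathbb{Q})$, which is the assertion of the lemma. So the real work is to prove the two inclusions.

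For the inclusion $\supseteq$, I would first record that $\sigma$ commutes with every monodromy element (as noted in Section 2.2), so $\sigma_*$ commutes with the $\pi_1(S_{129})$-action and the eigenspaces $H^+,H^-$ are monodromy-invariant. Consequently $\{f:H^-\subseteq\ker(f)\}$ is itself a monodromy-invariant subspace on which the action is dual to the action on $H^+$. Under the transfer isomorphism $H^+\cong H_1(S_3;\mathbb{Q})$ this is the monodromy of the quotient bundle $S_3\to M_{AK}/\sigma\to S_{129}$, which is generated by point-pushing maps $\text{Push}(a)=T_xT_y^{-1}$. Since $[x]=[y]=[a]$ in $H_1(S_3;\mathbb{Q})$, each such map acts trivially on $H_1(S_3;\mathbb{Q})$, so $\pi_1(S_{129})$ acts trivially on $H^+$ and therefore fixes every functional vanishing on $H^-$. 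This gives $\{f:H^-\subseteq\ker(f)\}\subseteq H^1(S_6;\mathbb{Q})^{\pi_1(S_{129})}$.

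For the reverse inclusion $\subseteq$, let $f$ be invariant. Applying Lemma \ref{comp} not only to $a_1^2$ but to each generator $a_i^2,b_i^2$ of $\pi_1(S_{129})$ --- the local computation is identical, and Case 2 funnels into the same conclusion because $f(\widetilde{a_i})=f(\overline{a_i})$ is equivalent to $f(\widetilde{a_i}-\overline{a_i})=0$ --- forces $f(\widetilde{c}-\overline{c})=0$ for every standard generating curve $c$ of $H_1(S_3)$ that splits in the branched cover. The geometric input I would then supply is that these antidiagonal classes span $H^-$: since $\sigma$ interchanges the two lifts $\overline{c},\widetilde{c}$ of a split curve, each $\widetilde{c}-\overline{c}$ lies in $H^-$, and a Riemann--Hurwitz dimension count ($\dim H^-=\dim H_1(S_3;\mathbb{Q})=6$) shows they exhaust $H^-$. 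Hence $f$ vanishes on $H^-$, completing the inclusion and the proof.

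The main obstacle is exactly the spanning statement in the last paragraph: Lemma \ref{comp} only produces the transvection directions carried by the particular curves appearing in the monodromy generators, so I must verify that, as $g$ ranges over a full generating set of $\pi_1(S_{129})$ and over the two pushed points $b,b'=\tau(b)$, the resulting classes $\widetilde{c}-\overline{c}$ genuinely fill out the entire $6$-dimensional space $H^-$ rather than a proper subspace. This forces one to keep careful track of which curves split in the double cover $S_6\to S_3$ determined by the chosen class in $H^1(S_3;\mathbb{Z}/2)$, and to match that splitting data against the directions supplied by Lemma \ref{3.1.2}.
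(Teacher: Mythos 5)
Your reverse inclusion rests on an unproven spanning statement, and this is a genuine gap rather than a routine verification: you yourself flag it as ``the main obstacle'' but offer no argument beyond a Riemann--Hurwitz dimension count, which only computes $\dim H^-=6$ and says nothing about whether the transvection directions $\widetilde{c}-\overline{c}$ produced by Lemma \ref{comp} actually fill that space. In fact the paper's own computations suggest they do not: running the argument of Lemma \ref{comp} over the generators of $\pi_1(S_{129})$ (the cases $a_1^2$ and $(b_2a_1)^2$ are carried out, the rest are analogous) yields only enough independent vanishing conditions to conclude $\dim\bigl(H^1(S_6;\mathbb{Q})^{\pi_1(S_{129})}\bigr)\le 7$, i.e.\ a $5$-dimensional space of constraints, one short of all of $H^-$. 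The curves that split in the double cover are constrained by the class in $H^1(S_3;\mathbb{Z}/2)$ defining the cover, and the antidiagonal classes they produce appear to span only a proper subspace of $H^-$; your proposal has no mechanism to recover the missing direction, so the inclusion $H^1(S_6;\mathbb{Q})^{\pi_1(S_{129})}\subseteq\{f:H^-\subseteq\ker f\}$ does not follow from the local computations alone.

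The paper closes exactly this gap by a global argument that your proposal lacks. From the exact sequence
\[
0\to H^1(S_6;\mathbb{Q})^{\pi_1(S_{129})}\to H^1(M_{AK};\mathbb{Q})\to H^1(S_{129};\mathbb{Q})\to 0
\]
one gets $\dim H^1(M_{AK};\mathbb{Q})=\dim H^1(S_{129};\mathbb{Q})+\dim H^1(S_6;\mathbb{Q})^{\pi_1(S_{129})}$, and since $M_{AK}$ is a K\"ahler manifold its first Betti number is even; as $\dim H^1(S_{129};\mathbb{Q})$ is even, the invariant subspace must have even dimension, ruling out $7$ and pinning the dimension at $6$, which combined with the lower bound $p^*H^1(S_3;\mathbb{Q})\subseteq H^1(S_6;\mathbb{Q})^{\pi_1(S_{129})}$ gives the isomorphism. (Your argument for that lower bound --- $\sigma$ commuting with the monodromy, the quotient monodromy being point-pushes which act trivially on $H_1(S_3;\mathbb{Q})$ --- is correct and is essentially what the paper uses implicitly.) To repair your proof you would either have to verify the spanning statement explicitly, tracking which curves split under the chosen class in $H^1(S_3;\mathbb{Z}/2)$ --- which the evidence above suggests will fail --- or import a parity argument of this K\"ahler type, at which point you have reproduced the paper's proof.
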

\begin{proof}
By using the same argument as in Lemma \ref{comp} on $(b_1)^2$ and $(b_2a_1)^2$, we obtain that $f(b_3^+-b_3^-)=0$, $f(b_1^+-b_1^-)=0$ and 
\[
f((b_2+a_1)^+-(b_2+a_1)^-)=0.
\]
Since we already have $f(a_1^+-a_1^-)=0$, we obtain that $f(b_2^+-b_2^-)=0$.

From the above discussion, any $f\in H^1(S_6;\mathbb{Q})^{\pi_1(S_{129})}$ is zero on the 5-dimensional space spanned by $a_1^+-a_1^-$, $b_1^+-b_1^-$ , $b_3^+-b_3^-$ , $b_3^+-b_3^-$ , $b_2^+-b_2^-$. Therefore dim$\left(H^1(S_6;\mathbb{Q})^{\pi_1(S_{129})}\right)\le 7$. Since $\pi^*H^1(S_3;\mathbb{Q}))\subset H^1(S_6;\mathbb{Q})^{\pi_1(S_{129})}$, we have that dim$\left(H^1(S_6;\mathbb{Q})^{\pi_1(S_{129})}\right)\ge 6$. Via the Serre spectral sequence of the fiber bundle $S_{6}\to M_{AK}\to S_{129}$, we have
\[
1\to H^1(S_{129};\mathbb{Q})  \to H^1(M_{AK};\mathbb{Q})\to H^1(S_6;\mathbb{Q})^{\pi_1(S_{129})} \to 1.
\]
This implies
\[
\text{dim}\left(H^1(M_{AK};\mathbb{Q})\right)=\text{dim}\left( H^1(S_{129}\right);\mathbb{Q})+\text{dim}\left(H^1(S_6;\mathbb{Q})^{\pi_1(S_{129})}\right) .
\]
As a branched cover of an algebraic surface along an algebraic 
curve, the manifold $M_{AK}$ is itself an algebraic surface; thus $M_{AK}$ is a K\"ahler manifold. Therefore, dim$\left(H^1(M_{AK};\mathbb{Q})\right)$ must be an even number. So we have dim$\left(H^1(S_6;\mathbb{Q})^{\pi_1(S_{129})}\right)= 6$, which shows
\[
H^1(S_6;\mathbb{Q})^{\pi_1(S_{129})}\cong H^1(S_3;\mathbb{Q}).
\]
\end{proof}

\subsection{Finishing the proof of Theorem \ref{main2}}
We will now finish the proof of Theorem \ref{main2}.
\begin{proof}[Proof of Theorem \ref{main2}]
Since $M_{AK}\xrightarrow{P} S_{129}\times S_3$ is a finite branched cover, the induced map $P^*:H^4(S_3\times S_{129};\mathbb{Q})\to H^4(M_{AK};\mathbb{Q})$ is an isomorphism. By Poincar\'e duality, $P^*:H^k(S_3\times S_{129};\mathbb{Q})\to H^k(M_{AK};\mathbb{Q})$ is injective for any $k$. Via Lemma \ref{keypoint}, we also have 
\[H^1(M_{AK};\mathbb{Q})\cong H^1(S_3;\mathbb{Q})\oplus H^1(S_{129};\mathbb{Q}).\]

Therefore $M_{AK}$ satisfies the assumption of Lemma \ref{main1}, which shows SFib$(M_{AK})=2$.
\end{proof}

\section{Fibering number of a finite cover of a trivial bundle}
In this section, we will prove Theorem \ref{thm3}. Let $E$ be a regular finite cover of $B\times F$ where $B$ and $F$ are surfaces with $g(B)>1$ and $g(F)>1$. Let $p_1: E\to B$ and $p_2:E\to F$ be the projections. Denote the image of $p_{1*}:\pi_1(E)\to \pi_1(B)$ by Im$(p_1)$ and the image of $p_{2*}:\pi_1(E)\to \pi_1(F)$ by Im$(p_2)$.
\begin{lem}
With the above definitions and assumptions, we have the following
\[
H^1(E;\mathbb{Q})\cong H^1(\text{Im}(p_1);\mathbb{Q})\oplus H^1(\text{Im}(p_2);\mathbb{Q}).
\]
\end{lem}
\begin{proof}
All spaces involved here are $K(\pi,1)$ spaces, sometimes we transition between group cohomology and cohomology of spaces. Let $\pi_1(\tilde{F})$ be the kernel of $p_{1*}:\pi_1(E)\to \text{Im}(p_1)$. Since $\pi_1(E)$ is a finite index normal subgroup of Im$(p_1)\times \text{Im}(p_2)$, we have that $\pi_1(\tilde{F})$ is a finite index normal subgroup of $\text{Im}(p_2)$. We have the following commutative diagram.
\begin{equation}
\xymatrix{
1\ar[r]& \pi_1(\tilde{F}) \ar[r]\ar[d]& \pi_1(E)\ar[r]\ar[d]& \text{Im}(p_1)\ar[r]\ar[d]^=& 1\\
1\ar[r]&\text{Im}(p_2)\ar[r]&  \text{Im}(p_1)\times  \text{Im}(p_2)\ar[r]&  \text{Im}(p_1)\ar[r] & 1.
}
\label{ex1}
\end{equation}
The group $\pi(E)$ acts on $H^1(\tilde{F};\mathbb{Q})$ by conjugacy. Since Im$(p_1)$ is commutative with Im$(p_2)$ and $p_{1*}:\pi_1(E)\to \text{Im}(p_2)$ is surjective, we have that
 \[
 H^1(\tilde{F};\mathbb{Q})^{\text{Im}(p_2)}\cong H^1(\tilde{F};\mathbb{Q})^{\pi_1(E)}.
 \]
Since $\pi_1(\tilde{F})$ is a finite index subgroup of $\text{Im}(p_2)$, we have that 
\[
H^1(\tilde{F};\mathbb{Q})^{\text{Im}(p_2)}\cong H^1(\text{Im}(p_2);\mathbb{Q}).
\]

By the top exact sequence of (\ref{ex1}), we obtain the following
\[
0\to H^1(\text{Im}(p_1);\mathbb{Q})\to H^1(E;\mathbb{Q})\to H^1(\tilde{F};\mathbb{Q})^{\pi(E)}\cong H^1(\text{Im}(p_2);\mathbb{Q})\to 0.
\]
The lemma follows.
\end{proof}

\begin{proof}[Proof of Theorem \ref{thm3}]
Since $\pi_1(E)$ is a finite index subgroup of $\text{Im}(p_1)\times  \text{Im}(p_2)$, we obtain that $H^4(\text{Im}(p_1)\times  \text{Im}(p_2);\mathbb{Q})\to H^4(E;\mathbb{Q})$ is an isomorphism. By Poincar\'e duality, $H^k(\text{Im}(p_1)\times  \text{Im}(p_2);\mathbb{Q})\to H^k(E;\mathbb{Q})$ is injective for every $k$. More specifically, $H^2(\text{Im}(p_1)\times  \text{Im}(p_2);\mathbb{Q})\subset H^2(E;\mathbb{Q})$. Therefore $E$ satisfies the assumptions of Lemma \ref{main1}, which shows SFib$(E)=2$.

\end{proof}

\section{An example with exactly 4 fiberings}
Now we deal with an example of Salter \cite{salter2015} and we prove that it has exactly 4 fiberings. As we mentioned before, the equivalence relation we choose is $\pi_1$-fiberwise diffeomorphism not fiberwise diffeomorphism. Under fiberwise diffeomorphism, $M_S$ only has one fibering. 

Let $\triangle$ be the diagonal in $S_g\times S_g$. Let $M_S=(S_g\times S_g-\triangle) \cup_{\theta} (S_g\times S_g-\triangle)$, where $\theta$ is the identification of the boundaries of the two copies of $S_g\times S_g-\triangle $. Each copy of $S_g\times S_g-\triangle$ has two fiberings $p_1$ and $p_2$ where $p_i$ is the projection onto the $i$th coordinate. Therefore $M_S$ has four obvious fiberings: $\{(p_i,p_j) \text{ for } i,j=1,2\}$.

There is a subtlety in defining $(p_1,p_2)$ and $(p_2,p_1)$ in the smooth category, but the details will be immaterial here. See \cite[Section 2]{salter2015}.

\begin{lem}\label{lem}
With the notation as in the previous paragraph and for $g\ge 2$, 
\[H^1(S_g\times S_g-\triangle;\mathbb{Q})\cong p_1^*(H^1(S_g;\mathbb{Q}))\oplus p_2^*(H^1(S_g;\mathbb{Q})).\]
\end{lem}
\begin{proof}
By the Thom isomorphism theorem, $H^1(S_g\times S_g,S_g\times S_g-\triangle;\mathbb{Q})=0$. The following long exact sequence of relative cohomology
\[
0\to H^1(S_g\times S_g;\mathbb{Q})\to H^1(S_g\times S_g-\triangle;\mathbb{Q})\to H^1(S_g\times S_g,S_g\times S_g-\triangle;\mathbb{Q})=0\]
tells us that $H^1(S_g\times S_g;\mathbb{Q})\cong H^1(S_g\times S_g-\triangle;\mathbb{Q})$. By the K\"unneth formula, \[H^1(S_g\times S_g-\triangle;\mathbb{Q})\cong p_1^*(H^1(S_g;\mathbb{Q}))\oplus p_2^*(H^1(S_g;\mathbb{Q})).\]
This completes the proof.
\end{proof}
Let 
\[
add: H^1(S_g;\mathbb{Q})\oplus H^1(S_g;\mathbb{Q})\oplus H^1(S_g;\mathbb{Q})\oplus H^1(S_g;\mathbb{Q})\to H^1(S_g; \mathbb{Q})
\]
be the addition of elements in the abelian group $H^1(S_g; \mathbb{Q})$.
\begin{lem} 
For $g>1$, we have the following exact sequences:
\begin{equation}
0\to H^1(M_S;\mathbb{Q}) \xrightarrow{E^1} H^1(S_g;\mathbb{Q})\oplus H^1(S_g;\mathbb{Q})\oplus H^1(S_g;\mathbb{Q})\oplus H^1(S_g;\mathbb{Q})\xrightarrow{add} H^1(S_g;\mathbb{Q})\to 0
\label{iden}
\end{equation}
and 
\[
0\to H^2(M_S;\mathbb{Q}) \xrightarrow{E^2} H^2(S_g\times S_g-\triangle;\mathbb{Q})\oplus H^2(S_g\times S_g-\triangle;\mathbb{Q}).
\]
\label{computation}
\end{lem}

\begin{proof}
Let $M_1$ and $M_2$ be the two copies of $S_g\times S_g-\triangle$ in the construction of $M_S$. Define $N:=M_1\cap M_2$; this is a circle bundle over $S_g$. The circle bundle $N$ has Euler number $2-2g\neq 0$, by the Serre spectral sequence of the circle bundle $S^1\to N\to S_g$, we have 
\[H^1(N;\mathbb{Q})=H^1(S_g;\mathbb{Q}).
\]

The map 
\[
H_1(N;\mathbb{Q})=H_1(S_g;\mathbb{Q})\to H_1(S_g\times S_g;\mathbb{Q})=H_1(S_g;\mathbb{Q})\oplus H_1(S_g;\mathbb{Q})
\]
 is induced by the diagonal embedding. Therefore 
\[
 H^1(S_g\times S_g;\mathbb{Q})=H^1(S_g;\mathbb{Q})\oplus H^1(S_g;\mathbb{Q})\to H^1(N;\mathbb{Q})=H^1(S_g;\mathbb{Q})
\]
  is the addition of the two elements (dual to the diagonal map). Consequently we have a long exact sequence coming from the Mayer--Vietoris pair $(M_1,M_2)$:
\begin{equation}
\begin{split}
0\to & H^1(M_S;\mathbb{Q}) \xrightarrow{E^1}H^1(S_g\times S_g-\triangle;\mathbb{Q})\oplus H^1(S_g\times S_g-\triangle;\mathbb{Q})\xrightarrow{s^*} H^1(N;\mathbb{Q})\to  \\
\to &H^2(M_S;\mathbb{Q}) \xrightarrow{E^2} H^2(S_g\times S_g-\triangle;\mathbb{Q})\oplus H^2(S_g\times S_g-\triangle;\mathbb{Q}).
\end{split}
\end{equation}
We know $s^*$ is surjective, therefore $E^1$ and $E^2$ are injective.
\end{proof} 

By the short exact sequence (\ref{iden}), we identify $H^1(M_S;\mathbb{Q})$ as a subspace of $H^1(S_g;\mathbb{Q})\oplus H^1(S_g;\mathbb{Q})\oplus H^1(S_g;\mathbb{Q})\oplus H^1(S_g;\mathbb{Q})$.
We use $x,y,z,w$ to represent the coordinates. Therefore any element $a\in H^1(M_S;\mathbb{Q})$ can be written as $a=(x,y,z,w)\in H^1(S_g;\mathbb{Q})\oplus H^1(S_g;\mathbb{Q})\oplus H^1(S_g;\mathbb{Q})\oplus H^1(S_g;\mathbb{Q})$ such that $x+y+z+w=0\in H^1(S_g;\mathbb{Q})$. We also identify $H^1(S_g\times S_g-\triangle;\mathbb{Q})$ with $H^1(S_g;\mathbb{Q})\oplus H^1(S_g;\mathbb{Q})$ by Lemma \ref{lem}. Any element $a'\in H^1(S_g\times S_g-\triangle;\mathbb{Q})$ can be written as $a'=(x,y)\in H^1(S_g;\mathbb{Q})\oplus H^1(S_g;\mathbb{Q})$.

We will need the following algebraic lemma \cite[ Lemma 3.7]{chen2016universal} on the cup product structure of $H^*(S_g\times S_g-\triangle;\mathbb{Q})$.
\begin{lem}
Let $r,s \in H^1(S_g\times S_g-\triangle;\mathbb{Q})$ be two independent elements. If $r\smile s=0$, then $r,s\in p_i^*(H^1(S_g;\mathbb{Q}))$ for some $i\in \{1,2\}$.
\label{pc}
\end{lem}

\begin{proof}[Proof of Theorem \ref{thm5}]

From the naturality of cup product, we have the following commutative diagram:
\begin{equation}
\xymatrix{
\Lambda^2E^1:\Lambda^2 H^1(M_S;\mathbb{Q})\ar[r] \ar[d]^{\text{cup}}& \Lambda^2( H^1(S_g\times S_g-\triangle;\mathbb{Q})\oplus H^1(S_g\times S_g-\triangle;\mathbb{Q})) \ar[d]^{\text{cup}}\\
E^2:H^2(M_S;\mathbb{Q})\ar[r] &H^2(S_g\times S_g-\triangle;\mathbb{Q})\oplus H^2(S_g\times S_g-\triangle;\mathbb{Q}).
}
\label{natural}
\end{equation}
\indent
Let $S_h\to E\xrightarrow{p} B$ be some fibering. Since $\chi(M_S)>0$ and $\chi(S_h)<0$, by computation $\chi(B)<0$ implying that $g(B)>1$. Define $H:=p^*(H^1(B;\mathbb{Q}))$. Since $g(B)>1$, there exists linearly independent $b,b'\in H^1(B;\mathbb{Q})$ so that $b \smile b'=0$. Let $p^*(b)=(x,y,z,w), p^*(b')=(x',y',z',w')\in H$. By Lemma \ref{computation} and diagram (\ref{natural}), we have that $p^*(b)\smile p^*(b')=0$ if and only if $(x,y)\smile (x',y')=0\in H^2(S_g\times S_g-\triangle;\mathbb{Q})$ and $(z,w)\smile (z',w')=0\in H^2(S_g\times S_g-\triangle;\mathbb{Q})$. By Lemma \ref{pc}, we have the following possibilities: one of (1) and $(1'$) must be true and one of of (2) and $(2'$) must be true.
\begin{itemize}[leftmargin=*]
\item[] (1) $(x,y)$ and $(x',y')$ are dependent in $H^1(S_g\times S_g-\triangle;\mathbb{Q})$.
\item[]($1'$)  $x=x'=0$ or $y=y'=0$.
\item[] (2) $(z,w)$ and $(z',w')$ are dependent in $H^1(S_g\times S_g-\triangle;\mathbb{Q})$.
\item[] $(2'$) $z=z'=0$ or $w=w'=0.$
\end{itemize}
In the original four fiberings, the subspaces $\{(p_i,p_j)^*(H^1(S_g;\mathbb{Q}))\text{ for } i,j=1,2\}$ satisfy the following.
\begin{itemize}
\item
$(p_1,p_1)^*(H^1(S_g;\mathbb{Q}))$ contains all elements that $y=0$ and $w=0$.
\item
$(p_1,p_2)^*(H^1(S_g;\mathbb{Q}))$ contains all elements that $y=0$ and $z=0$.
\item
$(p_2,p_1)^*(H^1(S_g;\mathbb{Q}))$ contains all elements that $x=0$ and $w=0$.
\item
$(p_2,p_2)^*(H^1(S_g;\mathbb{Q}))$ contains all elements that $x=0$ and $z=0$.
\end{itemize}
If ($1'$) and ($2'$) are true for $p^*(b)$, then $p^*(b)$ belongs to one of the four spaces above. By Lemma 3.6, the fibering $S_h\to E\xrightarrow{p} B$ must be one of the four original fiberings. Thus to conclude the proof of Theorem \ref{thm5}, it suffices to prove the following Claim \ref{H}.

\begin{claim}
There exists an element  in the subspace H satisfying $(1')$ and $(2')$.
\label{H}
\end{claim}
We now prove the claim. We assume that no element in $H$ satisfies ($1')$ and ($2')$. Since $g(B)>1$, for any element $a\in H$, the dimension of the subspace $\{h\in H:a\smile h=0\}$ is at least 3. We break our discussion into three cases.
\begin{itemize}
\item
{\bf\boldmath Case 1: There is an element $a=(x,y,z,w)\in H$ such that $x$, $y$, $z$ and $w$ are all nonzero.} Find $b\in H$ such that $b\smile a=0$. Via Lemma \ref{pc} ,we have that $a=(kx,ky,lz,lw)$ for $k,l\in \mathbb{Q}$. However the subspace $\{(kx,ky,lz,lw):k,l\in \mathbb{Q}\}$ is only 2-dimensional; this contradicts the fact that the dimension of the subspace $\{h\in H:a\smile h=0\}$ is at least 3. 
\item
{\bf\boldmath Case 2: There exists an element $a=(x,y,z,w)\in H$ such that $x=y=0$ and $z,w\neq 0$.} We know that $z+w=0$ by Lemma \ref{computation}. Let $b=(x',y',z',w')\in H$. If $b \smile a=0$, then $(z',w')=k(z,w)$ for $k\in \mathbb{Q}$ by Lemma \ref{pc}. The dimension of the space $\{(0,0,mz,mw):m\in \mathbb{Q}\}$ is 1 so there exists $b=(x',y',kz,kw)\in \{h\in H:a\smile h=0\}$ such that $x'$ or $y'$ nonzero. Since $z+w=0$, we have that $x'+y'=0$. Thus $x'$ and $y'$ are both nonzero. Let $l\in \mathbb{Q}$ such that $l+k\neq 0$. The linear combination $la+b=(x',y',(l+k)z,(l+k)w)\in H$ has all coordinates nonzero; this reduces to Case 1.
\item
{\bf\boldmath Case 3: Every nonzero element $(x,y,z,w)\in H$ has exactly one coordinate equal to zero.} If two elements $a,b\in H$ have different coordinates zero, we could find a linear combination $ka+lb\in H$ for $k,l\in \mathbb{Q}$ such that all coordinates of $ka+lb$ are nonzero; this reduces to Case 1. Therefore all elements in $H$ have the same coordinate equal to zero.

Assume without loss of generality that every element $(x,y,z,w)\in H$ only has $w=0$. There are independent elements $a=(x,y,z,0),b=(x',y',z',0)\in H$ such that $a\smile b=0$. By Lemma \ref{computation} and Lemma \ref{pc}, we have $(x',y')=k(x,y)$ for $k\in \mathbb{Q}$. Since $a,b$ are independent, we know that $z'\neq kz$. Then the nonzero element $k(x,y,z,0)-(x',y',z',0)=(0,0,kz-z',0)\in H$ only has one coordinate nonzero. This is a contradiction to the assumption of Case 3.
\end{itemize}
This completes the proof of the claim, hence the theorem.
\end{proof}

\bibliographystyle{alpha}
\bibliography{citing}{}

\end{document}